\title[Largest Minimal Inversion/Pair-Complete Permutation Sets]{Largest Minimal Inversion-Complete and Pair-Complete Sets of Permutations}
\author{Eric Balandraud}
\address{
 \small Institut de Math\'{e}matiques de Jussieu,
 \small Universit\'{e} Pierre et Marie Curie (Paris 6),
 \small Paris, France
 }
\email{eric.balandraud@imj-prg.fr}
\author{Maurice Queyranne}
\address{
 \small Sauder School of Business,
 \small University of British Columbia,
 \small Vancouver, B.C., Canada
 }
\curraddr{
 \small Center for Operations Research and Econometrics (CORE),
 \small Universit\'e Catholique de Louvain,
 \small Louvain-la-Neuve, Belgium
 }
\email{Maurice.Queyranne@sauder.ubc.ca}
\author{Fabio Tardella}
\address{
 \small Dipartimento MEMOTEF,
 \small Sapienza University of Rome,
 \small Roma, Italy
 }
\email{fabio.tardella@uniroma1.it}
\date{\today}
\newtheorem{theorem}{Theorem}
\newtheorem{lemma}{Lemma}
\newtheorem{corollary}[lemma]{Corollary}
\newtheorem{remark}{Remark}
\newcommand{\Rset}{\mathbb R}
\newcommand{\Zset}{\mathbb Z}
\def\calF{\mathcal{F}}
\def\calP{\mathcal{P}}
\def\calQ{\mathcal{Q}}
\newcommand{\set}[1]{\left\{#1\right\}}
\newcommand{\Id}{\text{id}}
\newcommand{\Rev}{\text{rev}}
\begin{document}

\begin{abstract}
We solve two related extremal problems in the theory of permutations.
A set $Q$ of permutations of the integers 1 to $n$ is inversion-complete (resp., pair-complete)
if for every inversion $(j,i)$, where $1 \le i < j \le n$, (resp., for every pair $(i,j)$, where $i\not= j$)
there exists a permutation in~$Q$ where $j$ is before~$i$.
It is minimally inversion-complete if in addition no proper subset of~$Q$ is inversion-complete; and similarly for pair-completeness.
The problems we consider are to determine the maximum cardinality of a minimal inversion-complete set of permutations, and
that of a minimal pair-complete set of permutations.
The latter problem arises in the determination of the Carath\'eodory numbers for
certain abstract convexity structures on the $(n-1)$-dimensional real and integer vector spaces.
Using Mantel's Theorem on the maximum number of edges in a triangle-free graph,
we determine these two maximum cardinalities and
we present a complete description of the optimal sets of permutations for each problem.
Perhaps surprisingly (since there are twice as many pairs to cover as inversions), these two maximum cardinalities coincide whenever $n \ge 4$.
\end{abstract}

\maketitle

\newpage
We consider the following extremal problems in the theory of permutations.
Given integer $n\ge 2$, let
$S_n$
denote the symmetric group of
all permutations of $[n] := \set{1,2,\dots,n}$ (so $|S_n| = n!$),
and $A_n = \set{(i,j) : i,j\in [n],~i\not=j}$
the set of all (ordered) pairs from $[n]$ (so $|A_n| = n(n-1)$).
A permutation $\pi = (\pi(1),\dots,\pi(n))$ {\em covers}
the pair $(\pi(k),\pi(l))\in A_n$ iff $k < l$.
An \emph{inversion} (see, e.g., \cite{Bona,Margolius,Markowsky})
is a pair $(j,i)\in A_n$ with $j > i$.
Let $I_n\subset A_n$ denote the set of all inversions.
A set $Q\subseteq S_n$ of permutations is \emph{inversion-complete} (resp., \emph{pair-complete})
if every inversion in~$I_n$ (resp., pair in~$A_n$) is covered by at least one permutation in~$Q$.
An inversion-complete set $Q$ is \emph{minimally} inversion-complete if no proper subset of~$Q$ is inversion-complete;
and similarly for pair-completeness.
For example, the set $Q' = \set{\Rev_n}$, where (using compact notation for permutations) $\Rev_n = n(n-1)\dots 21$ is
the reverse permutation, is minimally inversion-complete, and has minimum cardinality for this property; whereas
the set $P' = \set{\Id_n, \Rev_n}$, where $\Id_n = 12\dots n$ is the identity permutation, is minimally pair-complete,
and has minimum cardinality.

We determine the \emph{maximum} cardinality
$\gamma_I(n)$
of a minimal inversion-complete subset $Q\subseteq S_n$,
as well as the maximum cardinality $\gamma_P(n)$,
of a minimal pair-complete subset $P\subseteq S_n$.
The latter problem arose in the determination of the Carath\'eodory numbers for
the integral $L^{\natural}$ convexity structures on the $(n-1)$-dimensional real and integer vector spaces
$\Rset^{n-1}$ and~$\Zset^{n-1}$, see \cite{QT}.\footnote{We refer the curious reader to
    van de Vel's monograph \cite{vdVel} for a general introduction to convexity structures and convexity invariants
    (such as the Carath\'eodory number), and to Murota's monograph \cite{Murota} on various models of discrete convexity,
    including $L^{\natural}$ and related convexities.}
It was posed by the second author as ``An Integer Programming Formulation Challenge'' at
the Integer Programming Workshop, Valparaiso, Chile, March 11-14, 2012.

Stimulated by personal communication of an early version of our results,
Malvenuto et al.~\cite{Malvenuto} determine the exact value of, or bounds on,
the maximum cardinality of minimal inversion-complete sets in
more general classes of finite reflection groups.

Perhaps unexpectedly (since
there are twice as many pairs to cover as inversions),
the maximum cardinalities $\gamma_I(n)$ and $\gamma_P(n)$ considered herein
are equal for all $n \ge 4$ (and they only differ by one unit, viz., $\gamma_P(n) = \gamma_I(n)+1$, for $n = 2$ and~3).
Furthermore, for all
$n\ge 4$
the family
$\calQ^*_n$
of all maximum-cardinality minimal inversion-complete subsets of~$S_n$
is \emph{strictly} contained in
the family $\calP^*_n$
of all maximum-cardinality minimal pair-complete subsets.
All our proofs are constructive and produce corresponding optimal sets of permutations.

\medbreak
In Section~\ref{sec:inversions} we prove:
\begin{theorem}\label{thm:inversions}
\begin{enumerate}[(i)]
  \item For every $n\ge 2$, the maximum cardinality
    of a minimal inversion-complete subset of~$S_n$
    is $\gamma_I(n) = \lfloor n^2 /4 \rfloor$.
  \item For every even $n\ge 4$, the family $\calQ^*_n$ of
    all maximum-cardinality minimal inversion-complete subsets of~$S_n$
    is the family of all transversals of a family of $n^2/4$ pairwise disjoint subsets of~$S_n$,
    each of cardinality $\left[\left(\frac{n}{2}-1\right) ! \right]^2$,
    and thus $|\calQ^*_n|
    = \left[\left(\frac{n}{2}-1\right) ! \right]^{n^2/2}$.
  \item For every odd $n\ge 5$, $\calQ^*_n$ is the disjoint union of the families
    of all transversals of two families, each one of $\lfloor n^2/4 \rfloor$ pairwise disjoint subsets of~$S_n$
    of cardinality $\left(\lfloor\frac{n}{2}\rfloor-1\right) !\; \lfloor\frac{n}{2}\rfloor !$,
    and thus $|\calQ^*_n|
= 2\left[\left(\lfloor\frac{n}{2}\rfloor-1\right) !\; \lfloor\frac{n}{2}\rfloor !\right]^{\lfloor n^2/4 \rfloor}$.
\end{enumerate}
\end{theorem}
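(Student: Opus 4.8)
The plan is to translate minimality into a graph-theoretic constraint and then invoke Mantel's Theorem. For a minimal inversion-complete set $Q$, call an inversion \emph{private} if it is covered by exactly one member of $Q$; minimality guarantees that every $\pi\in Q$ covers at least one private inversion, and distinct permutations own distinct private inversions. View each private inversion $(j,i)$ (with $j>i$) as an edge $\set{i,j}$ of a graph $G$ on the vertex set $[n]$. The first and decisive step is the \emph{Key Lemma}: $G$ is triangle-free. I would prove this by contradiction. If three elements $a<b<c$ gave private inversions $(b,a)$, $(c,a)$, $(c,b)$, owned by $\pi_1,\pi_2,\pi_3$, then privateness forces, in $\pi_2$ (which covers $(c,a)$, so has $c$ before $a$), that $a$ precedes $b$ (else $\pi_2$ would also cover $(b,a)$) and that $b$ precedes $c$ (else it would also cover $(c,b)$); together $c$ before $a$ before $b$ before $c$ is a cyclic impossibility. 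Mantel's Theorem then bounds $|Q|$ by $|E(G)|$, giving $|Q|\le\lfloor n^2/4\rfloor$ and proving the upper bound in~(i).

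For the matching lower bound I would exhibit explicit extremal sets built on the \emph{natural} bipartition $[n]=L\cup H$ with $L=\set{1,\dots,m}$ and $H=\set{m+1,\dots,n}$, $m=\lfloor n/2\rfloor$ (for odd $n$ there is also the shifted split with $|L|=m+1$). The $|L|\,|H|=\lfloor n^2/4\rfloor$ cross pairs $(h,\ell)$ with $\ell\in L$, $h\in H$ are exactly the edges of the extremal complete bipartite graph. For each such pair I take all permutations of the shape $[\text{arrangement of }L\setminus\set{\ell}]\,h\,\ell\,[\text{arrangement of }H\setminus\set{h}]$; such a permutation inverts the single cross pair $(h,\ell)$ and no other. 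A direct check shows that any transversal of these $\lfloor n^2/4\rfloor$ blocks is inversion-complete — every within-$L$ inversion $(b,a)$ is covered by the chosen member with $\ell=a$ (where $a$ sits last among $L$), every within-$H$ inversion $(d,c)$ by the member with $h=d$ (where $d$ sits first among $H$) — and minimal, since each cross pair is privately owned by its block's representative. Counting the $(|L|-1)!\,(|H|-1)!$ arrangements per block yields the stated block cardinalities in~(ii)--(iii).

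Finally I would prove the structural claims by running the bound in reverse. Equality $|Q|=\lfloor n^2/4\rfloor$ forces, via the equality case of Mantel's Theorem, that $G$ is a complete bipartite graph $K_{X,Y}$ with parts of sizes $m$ and $n-m$, that the private inversions are exactly the cross pairs of $\set{X,Y}$, and that each $\pi\in Q$ inverts exactly one cross pair (a second cross inversion in $\pi$ would rob another permutation of its private inversion). The permutation inverting the single cross pair $e$ is then forced into precisely the block shape above, so $Q$ is a transversal of the blocks. The main obstacle — the step I expect to be most delicate — is showing that $\set{X,Y}$ must be \emph{natural}, i.e.\ each part an interval of $[n]$. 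I would argue this through the digraph of precedences forced inside a single such $\pi_e$: all cross pairs except $e$ must appear in increasing order, so if some part were not an interval there would be either a within-part inversion that every admissible $\pi_e$ is compelled (by a chain of forced cross precedences through the intervening other-part elements) to leave uncovered, or a cross pair $e$ whose own required inversion closes a directed cycle with these forced precedences — either way contradicting inversion-completeness or the very existence of $\pi_e$. Since the only natural bipartitions with the prescribed part sizes are the single split for even $n$ and the two adjacent splits for odd $n$, this produces the one family of transversals in~(ii) and the disjoint union of two families in~(iii), and the stated cardinalities of $\calQ^*_n$ follow by multiplying the block sizes.
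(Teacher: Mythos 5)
Your overall strategy is the same as the paper's: a triangle-free graph of critical inversions plus Mantel's Theorem for the upper bound, the blocks $[\text{perm.\ of }L\setminus\set{\ell}]\,h\,\ell\,[\text{perm.\ of }H\setminus\set{h}]$ (exactly the paper's sets $F_{\ell,c,h}$) for the matching construction, and the strong form of Mantel's Theorem plus an ``each side is an interval'' argument for the structural converse. However, your Key Lemma is false as you state it. You let $G$ contain \emph{all} private inversions; but for $Q=\set{\Rev_n}$, which is minimally inversion-complete, every inversion is private and $G$ is the complete graph $K_n$, full of triangles. The flaw in your proof is the implicit assumption that the owners $\pi_1,\pi_2,\pi_3$ are distinct: ``else $\pi_2$ would also cover $(b,a)$'' is only a contradiction when $\pi_2\ne\pi_1$. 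The repair is exactly what the paper does: select \emph{one} private (critical) inversion for each $\pi\in Q$ and let $G$ consist only of these $|Q|$ selected edges, so that distinct edges have distinct owners. With that change your cyclic-impossibility argument goes through verbatim (and is in fact a cleaner, single-case version of the paper's three-case analysis), and Mantel then gives $|Q|=|E(G)|\le\lfloor n^2/4\rfloor$.

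The remainder of the plan matches the paper, but two points you leave as sketches are where the real work lies. First, the step you yourself flag as delicate --- that each side of the forced balanced complete bipartite graph must be an interval of $[n]$ --- is settled in the paper by fixing the side $W$ containing index $1$, taking any $i>1$ in $W$ and $j<k$ on the other side, and ruling out the orderings $j<i<k$ and $j<k<i$ by exhibiting an inversion ($(k,1)$, resp.\ $(i,1)$) that no permutation of $Q$ could then cover; your proposed argument via forced precedences and directed cycles is the same idea but would need to be carried out at this level of concreteness. Second, for odd $n$ you should also verify that the two transversal families (for the two adjacent splits) are genuinely disjoint before adding their cardinalities in part (iii); the paper does this by observing that $\pi(c+1)$ lies in $[c]$ for members of one family and outside $[c]$ for the other.
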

\noindent
To prove Theorem~\ref{thm:inversions}, we first establish the upper bound $\gamma_I(n) \le \lfloor n^2 /4 \rfloor$ by
applying Mantel's Theorem
(which states, \cite{Mantel,Turan}, that the maximum number of edges in an $n$-vertex triangle-free graph is $\lfloor n^2/4\rfloor$)
to certain ``critical selection graphs'' associated with the minimal inversion-complete subsets of~$S_n$.
We then show that this upper bound is attained by the families of transversals described in parts \textit{(ii)--(iii)}.
We complete the proof by showing that, for $n\ge 4$, every $Q\in\calQ^*_n$ must be such a transversal.
Note that these results imply the asymptotic growth rate $|\calQ^*_n| = 2^{\theta(n^3 \log n)}$ as $n$ grows.

\par\medbreak
In Section~\ref{sec:pairs} we prove:
\begin{theorem}\label{thm:pairs}
\begin{enumerate}[(i)]
  \item For every integer $n\ge 2$, the maximum cardinality
    of a minimal pair-complete subset of~$S_n$
    is $\gamma_P(n) = \max\set{n, \lfloor n^2 /4 \rfloor}$.
  \item For all $n\ge 5$ the set
    $\calP^*_n$
    of maximum-cardinality minimal pair-complete subset of~$S_n$ is
    equal to the set
    $\tau\circ\calQ^*_n$
    resulting from applying every possible permutation $\tau\in S_n$ of the index set~$[n]$ to
    each $Q\in\calQ^*_n$.
  \item For all $n\ge 5$ there is a one-to-one correspondence between $\calP^*_n$
    and the Cartesian product $\binom{[n]}{\lfloor n/2\rfloor} \times \calQ^*_n$,
    where $\binom{[n]}{\lfloor n/2\rfloor}$ is the family of all subsets $S\subset [n]$ with cardinality~$|S| = \lfloor n/2\rfloor$.
\end{enumerate}
\end{theorem}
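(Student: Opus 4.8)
The plan is to run every statement through the relabeling symmetry of pair-completeness and to feed it into Theorem~\ref{thm:inversions}. First I would record the basic dictionary: for any $\tau\in S_n$ the relabeled set $\tau\circ Q=\set{\tau\circ\pi:\pi\in Q}$ covers the pair $(\tau(a),\tau(b))$ exactly when $Q$ covers $(a,b)$. Hence pair-completeness, minimality, and cardinality are all invariant under $Q\mapsto\tau\circ Q$, whereas $\tau\circ Q$ is \emph{inversion}-complete iff $Q$ covers every pair $(c,d)$ with $\tau(c)>\tau(d)$. In particular $Q$ is pair-complete iff $\tau\circ Q$ is inversion-complete for every $\tau$, equivalently iff both $Q$ and $\Rev_n\circ Q$ are inversion-complete---one order supplying the inversions, the reversed order the ascents. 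This is the device that lets the inversion results drive the pair results.

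For the upper bound in part~(i) I would attach to a minimal pair-complete set $P$ a critical selection digraph: by minimality each $\pi\in P$ is the unique coverer in $P$ of some ordered pair, a private pair $p(\pi)=(a_\pi,b_\pi)$, and $\pi\mapsto p(\pi)$ is injective into arcs. Exactly as in the proof of Theorem~\ref{thm:inversions}, if three private pairs lie over a triangle $\set{a,b,c}$, then one of the three owning permutations is forced into a cyclic order of $a,b,c$; so the underlying undirected graph is triangle-free and Mantel's Theorem bounds the number of \emph{edges} by $\lfloor n^2/4\rfloor$. The genuine difficulty, and the step I expect to be the main obstacle, is that the arc map need not be injective on edges: an edge $\set{a,b}$ can be \emph{doubled}, with $(a,b)$ private to one permutation and $(b,a)$ private to another, which a priori only yields $|P|\le 2\lfloor n^2/4\rfloor$. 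I would close this factor-of-two gap by a local exchange argument showing that a doubled edge is incompatible with an extremal (complete bipartite) edge set: the two opposite private orientations, combined with the orientations forced on the incident edges, produce a cyclic constraint on some permutation, so the simple underlying graph must already shed at least one edge per doubling. The small cases $n\le 3$, where the bound is $n$ rather than $\lfloor n^2/4\rfloor$, I would dispatch by hand.

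For the matching lower bound and part~(ii) the key lemma is that every $Q\in\calQ^*_n$ is itself pair-complete. I would read this off the explicit description in Theorem~\ref{thm:inversions}(ii)--(iii): a maximum minimal inversion-complete permutation privately covers exactly one cross-inversion of the extremal complete bipartite critical graph and no other, which forces it to list all low values before all high values except for its single private inversion; hence it covers every cross-ascent but one. The within-part ascents are then covered \emph{across} the different blocks thanks to the freedom in the internal orders---this is precisely where the hypothesis $n\ge 5$ enters---so $Q$ covers all pairs. With part~(i) this makes $Q$ a maximum minimal pair-complete set, and relabeling-invariance upgrades this to $\tau\circ\calQ^*_n\subseteq\calP^*_n$ for every $\tau$.

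Finally, for the converse inclusion in~(ii) and the count in~(iii), I would take an arbitrary $P\in\calP^*_n$. Its critical selection graph is triangle-free with $\lfloor n^2/4\rfloor$ edges, so by the uniqueness half of Mantel's Theorem it is $K_{\lfloor n/2\rfloor,\lceil n/2\rceil}$, exhibiting a canonical bipartition $(L',H')$ of $[n]$. The core structural step---where the no-doubling analysis is reused---is to show that the private orientations are consistent with this bipartition, i.e.\ that $P$ is inversion-complete for the order in which $L'$ precedes $H'$, so that relabeling $(L',H')$ to the standard low/high blocks carries $P$ into $\calQ^*_n$. This yields $\calP^*_n=\bigcup_{\tau}\tau\circ\calQ^*_n$, after which the bijection with $\binom{[n]}{\lfloor n/2\rfloor}\times\calQ^*_n$ follows by a stabilizer computation: the $\lfloor n/2\rfloor$-subset records the part $L'$, and the relabelings fixing the standard bipartition act so that each fibre is in bijection with $\calQ^*_n$. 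I would run the even and odd cases separately in order to reconcile this clean count with the factor of $2$ appearing in Theorem~\ref{thm:inversions}(iii).
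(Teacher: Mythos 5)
Your overall strategy (critical selection graphs, Mantel's Theorem, relabelling back to the inversion case) is the same as the paper's, but the crux of parts \textit{(ii)}--\textit{(iii)} --- showing that for an arbitrary extremal $P$ the private orientations are consistent with the Mantel bipartition --- is exactly the step you leave as a gesture (``the no-doubling analysis is reused''), and that mechanism does not suffice. No-doubling is in fact the easy part: if $(i,j)$ is private to some $\pi$ then every \emph{other} permutation covers $(j,i)$, so as soon as $|P|\ge 3$ the reverse pair cannot also be private; no ``local exchange'' or appeal to extremality is needed, and the factor-of-two worry evaporates. What no-doubling does \emph{not} give you is orientation consistency. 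The paper proves that by first showing the critical selection \emph{digraph} is acyclic --- along a directed path on $k$ vertices all but $k-1$ permutations are forced to reverse its endpoints, and $\lfloor n^2/4\rfloor-(n-1)\ge 2$ precisely when $n\ge 5$ --- and then running a case analysis from a source vertex. That is where $n\ge 5$ genuinely enters, not in the within-part coverage of the lower-bound lemma as you claim (that step works already for $n\ge 4$, and its mechanism is the \emph{forced} placement of the displaced low element at position $c+1$, not ``freedom in the internal orders'': a transversal could order every prefix increasingly, and then within-block pairs are covered only via the displaced elements). The distinction matters because for $n=4$ your orientation-consistency claim is \emph{false}: the circular-shift orbit $\{1234,\,2341,\,3412,\,4123\}$ is a maximum-cardinality minimal pair-complete set whose critical selection graph is $K_{2,2}$ on $\{1,3\}$ and $\{2,4\}$, but whose private pairs form the directed cycle $1\to 4\to 3\to 2\to 1$, so it is not of the form $\tau\circ Q$. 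Any correct proof of the converse in \textit{(ii)} must contain an acyclicity-type ingredient that breaks at $n=4$; your sketch has none.

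A secondary remark: your stabilizer-based count for part \textit{(iii)} is plausible but underspecified --- it needs the observation that block-preserving relabellings map transversals of $\calF_c$ to transversals of $\calF_c$, together with the odd-$n$ bookkeeping distinguishing $\calF_{\lfloor n/2\rfloor}$ from $\calF_{\lceil n/2\rceil}$. The paper avoids the stabilizer computation by fixing one canonical monotone relabelling $\tau_W$ per ordered bipartition and exhibiting the bijection $P\mapsto (X, \tau_W^{-1}\circ P)$ explicitly.
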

\noindent The intuition for the formula $\gamma_P(n) = \max\set{n, \lfloor n^2 /4 \rfloor}$ in part~\textit{(i)}
is that it suffices to consider two
classes
of minimal pair-complete subsets:
\begin{enumerate}[(1)]
\item the subsets $P$
    (each
    of cardinality~$n$) formed by the $n$ circular shifts of any given permutation $\pi\in S_n$,
    i.e., $P = \{\pi,\, \pi\circ\sigma,\, \pi\circ\sigma^2,\dots, \pi\circ\sigma^{n-1}\}$,
    where the (forward) circular shift $\sigma\in S_n$ is defined by $\sigma(i) = (i \mod n) + 1$ for all $i\in [n]$; and
\item the subsets
    $P = \tau\circ Q$ (each of cardinality~$\lfloor n^2 /4 \rfloor$)
    defined in part~\textit{(ii)} of Theorem~\ref{thm:pairs}.
\end{enumerate}
%
The characterization in part~\textit{(ii)} of Theorem~\ref{thm:pairs} only implies that $|\calP^*_n| \le n!\, |\calQ^*_n|$,
because different pairs $(\tau, Q)$ may give rise to the same set $\tau\circ Q$
(as will be seen, for example, in Remark~\ref{rem:pairs} at the end of this paper, with the ``class-(2) subsets'' for the case $n=4$ therein).
Part~\textit{(iii)}, on the other hand, refines the preceding result using a ``canonical'' permutation $\tau_W$
induced by a balanced partition $\{W,\overline{W}\}$ of the index set~$[n]$
(i.e., with $|W|$ or $|\overline{W}| = \lfloor n/2\rfloor$, a consequence of Mantel's Theorem).
This implies that $|\calP^*_n| = \binom{n}{\lfloor n/2\rfloor} |\calQ^*_n|$ for $n \ge 5$.
Thus, although $|\calP^*_n| > |\calQ^*_n|$ for all $n \ge 5$,
their asymptotic growth rate (as $n$ grows) are similar, differing only in lower order terms in the exponent ${\theta(n^3 \log n)}$.

\section{minimal Inversion-Complete Sets of Permutations}\label{sec:inversions}
In this Section we prove Theorem~\ref{thm:inversions} and present a characterization of the family
$\calQ^*_n$
of all maximum-cardinality minimal inversion-complete subsets of~$S_n$.
For $n=2$, there is a single inversion $(2,1)$, which is covered by the reverse permutation 21,
so part \textit{(i)} of Theorem ~\ref{thm:inversions} trivially holds and $\calQ^*_2 = \{21\}$.
Hence assume $n\ge 3$ in the rest of this Section.

Consider any minimal inversion-complete subset $Q$ of~$S_n$.
Since $Q$ is \emph{minimally} inversion-complete, for every permutation $\pi\in Q$ there exists an inversion $(j,i)\in I_n$,
called a \emph{critical inversion}, which is covered by $\pi$ and by no permutation in $Q\setminus\set{\pi}$
(for otherwise $Q\setminus\set{\pi}$ would also be inversion-complete, and thus $Q$ would not be minimally inversion-complete).
For every permutation $\pi\in Q$, select \emph{one} critical inversion that it covers
(arbitrarily chosen if $\pi$ covers more than one critical inversion).
Let $q_{j,i}$ denote the unique permutation in~$Q$ that covers the selected critical inversion $(j,i)$.
Consider a corresponding \emph{critical selection graph} $G_Q = ([n], E_Q)$, where $E_Q$ is the set of these $|Q|$ selected critical inversions
(one for each permutation in~$Q$), considered as undirected edges.
Thus $|E_Q| = |Q|$.

Recall that a graph $G$ is \emph{triangle-free} if there are no three distinct vertices $i$, $j$ and $k$ such that
all three edges $\set{i,j}$, $\set{i,k}$ and $\set{j,k}$ are in~$G$.

\begin{lemma}\label{lem:inv_triangle_free}
If subset $Q\subseteq S_n$ is minimally inversion-complete, then every corresponding critical selection graph $G_Q$ is triangle-free.
\end{lemma}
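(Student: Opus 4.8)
The plan is to argue by contradiction: I would suppose that some critical selection graph $G_Q$ contains a triangle and extract from it an impossible cyclic ordering of three elements inside a single permutation of~$Q$. Accordingly, suppose three distinct vertices, which I label $a<b<c$, span a triangle in $G_Q$. Its three edges $\set{a,b}$, $\set{a,c}$ and $\set{b,c}$, read as inversions (larger entry first), are the selected critical inversions $(b,a)$, $(c,a)$ and $(c,b)$. Since a critical inversion is covered by exactly one permutation of~$Q$, and the selection assigns a distinct inversion to each permutation (so that $|E_Q| = |Q|$), these three edges are owned by three distinct permutations $q_{b,a}$, $q_{c,a}$ and $q_{c,b}$.

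The key observation is that criticality yields \emph{negative} information about all the non-owning permutations: if $(j,i)$ is the critical inversion selected for $q_{j,i}$, then no other permutation of~$Q$ covers $(j,i)$, which forces every other permutation to place $i$ before~$j$. I would apply this to the single permutation $q_{c,a}$. As the owner of $(c,a)$, it places $c$ before~$a$. Because $q_{c,a}\neq q_{b,a}$, it does not cover the critical inversion $(b,a)$, hence it places $a$ before~$b$. Because $q_{c,a}\neq q_{c,b}$, it does not cover $(c,b)$, hence it places $b$ before~$c$.

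Combining these three relations forces $c$ to precede~$a$, $a$ to precede~$b$, and $b$ to precede~$c$ within the one permutation $q_{c,a}$, i.e., a cyclic order $c,a,b,c$, which is impossible. This contradiction shows that no triangle exists, so $G_Q$ is triangle-free.

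I expect the only genuine subtlety to be bookkeeping: correctly translating ``does not cover an inversion'' into the reversed relative order, and then selecting the permutation $q_{c,a}$ (the owner of the \emph{longest} edge $\set{a,c}$) rather than either of the other two, since that is the single choice for which the three order constraints close up into a cycle. The distinctness of the three owners, which is what licenses the two applications of criticality, follows immediately from the construction of $E_Q$ and needs no separate argument.
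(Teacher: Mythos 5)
Your proof is correct: the three owners are indeed distinct (a critical inversion of $\pi$ is covered by no other member of $Q$, so no inversion can be selected for two permutations), and the three order constraints you impose on $q_{c,a}$ close into an impossible cyclic order. This is essentially the paper's argument --- both rest on the observation that criticality yields negative ordering information about every non-owning permutation --- but your packaging is cleaner: by assuming all three edges of the triangle are present and examining only the owner of the edge $\set{a,c}$ joining the smallest and largest vertices, you obtain a single uniform contradiction, whereas the paper fixes two adjacent edges and runs a three-way case analysis on the position of their shared vertex (and in its case $i<j<k$ must argue instead about the permutations \emph{other than} the two owners). Nothing is lost in your version, and no step needs repair.
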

\begin{proof}
Assume $Q\subseteq S_n$ is minimally inversion-complete, and let $G_Q = ([n], E_Q)$ be a corresponding critical selection graph.
We need to show that, if $E_Q$ contains two adjacent edges $\set{i,j}$ and $\set{j,k}$, then it cannot contain edge $\set{i,k}$.
Thus assume that $\set{i,j}$ and $\set{j,k}\in E_Q$ and, without loss of generality, that $i < k$.
We want to show that $(k,i)$ cannot be a selected critical inversion.
We consider the possible relative positions of index $j$ relative to $i$ and $k$:
\begin{itemize}
\item If $j < i < k$, i.e., both $(i,j)$ and $(k,j)$ are selected critical inversions, then
  $q_{k,j}$ cannot cover $(i,j)$ and therefore
  we must have $k$ before $j$ before~$i$ in~$q_{k,j}$
  (that is, these three indices must be in positions $\pi^{-1}(i) < \pi^{-1}(j) < \pi^{-1}(k)$ in~$\pi = q_{k,j}$).
  This implies that $(k,i)$ cannot be a selected critical inversion.
\item If $i < k < j$, i.e., both $(j,i)$ and $(j,k)$ are selected critical inversions, then
  this is dual (in the order-theoretic sense) to the previous case:
  $q_{j,i}$ cannot cover $(j,k)$ and therefore we must have $k$ before $j$ before~$i$ in~$q_{j,i}$,
  implying that $(k,i)$ cannot be a selected critical inversion.
\item Else $i < j < k$, i.e., both $(j,i)$ and $(k,j)$ are selected critical inversions.
  In every permutation $\pi\in Q\setminus\set{q_{j,i}, q_{k,j}}$ we must have $i$ before $j$ before~$k$
But then $(k,i)$ cannot be a selected critical inversion, since it can only be covered in $Q$ by $q_{j,i}$ or $q_{k,j}$,
  for each of which another critical inversion has been selected.
\end{itemize}
Therefore, $(k,i)$ cannot be a selected critical inversion.
This implies that no three indices $i$, $j$ and~$k$ can define a triangle in~$G_Q$.
\end{proof}
Since $|Q| = |E_Q|$, Mantel's Theorem implies
\begin{corollary}\label{cor:inversions}
  For every $n\ge 2$, the maximum cardinality $\gamma_I(n)$ of
  a minimal inversion-complete subset of~$S_n$ satisfies $\gamma_I(n) \le \lfloor n^2 /4 \rfloor$.
\end{corollary}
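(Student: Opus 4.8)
The plan is to deduce the bound directly from the preceding Lemma together with Mantel's Theorem, since the combinatorial heart of the argument has already been carried out in Lemma~\ref{lem:inv_triangle_free}. First I would dispose of the boundary case $n=2$ separately: here $I_2 = \set{(2,1)}$ consists of a single inversion, covered only by the reverse permutation $21$, so the unique minimal inversion-complete set has cardinality $1 = \lfloor 2^2/4\rfloor$, and the inequality holds with equality. For the remaining cases $n \ge 3$ I would take an arbitrary minimal inversion-complete subset $Q \subseteq S_n$ and form an associated critical selection graph $G_Q = ([n], E_Q)$ exactly as constructed above.

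The key point to verify is that $|E_Q| = |Q|$. This holds because the selection assigns to each permutation $\pi \in Q$ exactly one critical inversion that it covers, while each selected critical inversion $(j,i)$ is covered in $Q$ by the \emph{unique} permutation $q_{j,i}$; hence distinct permutations contribute distinct edges, giving a bijection between $Q$ and $E_Q$. With this identity in hand, Lemma~\ref{lem:inv_triangle_free} tells us that $G_Q$ is a triangle-free graph on the $n$ vertices of~$[n]$, so Mantel's Theorem bounds its number of edges by $\lfloor n^2/4\rfloor$. Combining these, $|Q| = |E_Q| \le \lfloor n^2/4\rfloor$.

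Since this inequality holds for every minimal inversion-complete subset $Q$, taking the maximum over all such $Q$ yields $\gamma_I(n) \le \lfloor n^2/4\rfloor$, as claimed. I do not anticipate any genuine obstacle at this stage: all the delicate case analysis on the relative positions of the indices $i$, $j$, $k$ was absorbed into the proof of Lemma~\ref{lem:inv_triangle_free}, and what remains is the routine translation ``edges~$=$~permutations'' followed by a direct invocation of the extremal bound. The only point requiring care is ensuring that the edge-permutation correspondence is genuinely a bijection rather than merely a surjection onto $Q$, which is precisely why I would make the uniqueness of~$q_{j,i}$ explicit when establishing $|E_Q| = |Q|$.
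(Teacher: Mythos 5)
Your proposal is correct and follows essentially the same route as the paper, which likewise derives the bound by observing that $|Q| = |E_Q|$ for the critical selection graph and then invoking Mantel's Theorem via Lemma~\ref{lem:inv_triangle_free}; your explicit justification of the bijection between permutations and selected critical inversions, and your separate treatment of $n=2$, are harmless elaborations of the same argument.
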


We prove constructively that the upper bound in Corollary~\ref{cor:inversions} is attained, i.e., that
part \textit{(i)} of Theorem ~\ref{thm:inversions} holds.
For $n=3$, we have 3 triangle-free graphs on vertex set $\set{1,2,3}$,
each consisting of exactly two of the three possible edges.
Consider the edge set $E' = \big\{  \set{1,2}, \set{1,3} \big\}$:
if it is the edge set of a critical selection graph $G_{Q'}$,
then we must have $q'_{2,1} = 213\in Q'$ (for otherwise, $q'_{2,1}$ would also cover the inversion $(3,1)$,
contradicting that $(3,1)$ is also selected),
and similarly $q'_{3,1} = 312\in Q'$.  Thus $Q'$ must be the set $\set{213, 312}$,
which is indeed inversion-complete, and thus a largest minimal inversion-complete subset of~$S_3$.
This implies that $\gamma_I(n) = 3 = \lfloor \frac{n^2}{4} \rfloor$ holds for $n=3$.
Similarly, the  edge sets $E'' = \big\{  \set{1,2}, \set{2,3} \big\}$
and $E''' = \big\{  \set{1,3}, \set{2,3} \big\}$ define the other two maximum-cardinality
minimal inversion-complete subsets $Q'' = \set{213, 123}$ and $Q''' = \set{231, 321}$ of~$S_3$.
Thus
$\calQ^*_3  = \{Q',Q'',Q'''\}$ and $|\calQ^*_3|  = 3$.

Thus assume $n\ge 4$ in the rest of this Section.
We now introduce certain subsets of $S_n$, which we will use to show that
the upper bound in Corollary~\ref{cor:inversions} is attained,
and to construct
the whole set $\calQ^*$.
For every triple $(i,c,j)$ of integers such that $1 \le i \le c < j \le n$, let
$F_{i,c,j}$
denote the set of all permutations $\pi\in S_n$ such that:
\begin{itemize}
\item $\pi(h)\le c$ for all $h<c$;
\item $\pi(c)=j$;
\item $\pi(c+1)=i$; and
\item $\pi(k)\ge c+1$ for all  $k> c+1$.
\end{itemize}
If $c >1$ the first two conditions imply that $\big(\pi(1),\dots,\pi(c-1)\big)$ is any permutation of $[c]\setminus\{i\}$;
and if $c+1<n$ the last two conditions imply that $\big(\pi(c+2),\dots,\pi(n)\big)$ is any permutation of
$\{c+1,\dots,n\}\setminus\{j\}$.
Thus the cardinality of $F_{i,c,j}$ is $(c-1)!\,(n-c-1)!$.
Note also that, for every $\pi\in F_{i,c,j}$,
$(k,h) = (j,i)$ is the unique inversion $(k,h)$ with $h \le c < k$ that is covered by~$\pi$.
Thus for every fixed $c$ the sets $F_{i,c,j}$ ($1 \le i \le c < j \le n$) are pairwise disjoint
($F_{i,c,j}\cap F_{i',c,j'}=\emptyset$ whenever $(i,j)\neq(i',j')$).
Recall that, given a collection $\calF$ of sets,
a \emph{transversal} is a set containing exactly one element from each member of~$\calF$.
\begin{lemma}\label{lem:inv_transv_OK}
For every integers $1\le c < n$, every transversal $T$ of the family
$\calF_c = \{F_{i,c,j} : 1 \le i \le c < j \le n\}$ is minimally inversion-complete.
\end{lemma}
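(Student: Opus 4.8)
The plan is to prove the two defining properties separately: first that an arbitrary transversal $T$ of $\calF_c$ is inversion-complete, and then that it is minimal. Throughout I would write $\pi_{i,j}$ for the unique element of $T$ lying in $F_{i,c,j}$, so that $|T| = c(n-c)$ and, by the structural description preceding the lemma, $\pi_{i,j}$ places a permutation of $[c]\setminus\{i\}$ in positions $1,\dots,c-1$, the value $j$ in position $c$, the value $i$ in position $c+1$, and a permutation of $\{c+1,\dots,n\}\setminus\{j\}$ in positions $c+2,\dots,n$.

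For completeness I would split the inversions $(j,i)$ (with $i<j$) into three types according to the threshold $c$. A \emph{crossing} inversion has $i\le c<j$; these are handled immediately, since $\pi_{i,j}\in F_{i,c,j}$ covers exactly this pair by construction. A \emph{low} inversion has $i<j\le c$; here I exploit that in every $\pi_{i,j'}$ (for any $j'>c$, at least one of which exists because $c<n$) the value $i$ sits in position $c+1$, after the entire block $1,\dots,c-1$ that contains $j$, so $\pi_{i,j'}$ covers $(j,i)$. Dually, a \emph{high} inversion has $c<i<j$; here every $\pi_{i',j}$ (for any $i'\le c$, which exists since $c\ge 1$) places $j$ in position $c$, before the block $c+2,\dots,n$ that contains $i$, so $\pi_{i',j}$ covers $(j,i)$. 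Since low inversions exist only when $c\ge 2$ and high inversions only when $c\le n-2$, the required index choices are always available, and every inversion is covered.

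For minimality the key is the uniqueness statement already established: for every $\pi\in F_{i,c,j}$, the pair $(j,i)$ is the \emph{only} inversion $(k,h)$ with $h\le c<k$ that is covered by $\pi$. Consequently, among all elements of $T$, the crossing inversion $(j,i)$ is covered by $\pi_{i,j}$ and by no other $\pi_{i',j'}$ (whose unique crossing inversion is $(j',i')\ne(j,i)$ whenever $(i',j')\ne(i,j)$). Thus $(j,i)$ is a private, critical inversion for $\pi_{i,j}$: deleting $\pi_{i,j}$ leaves it uncovered. Since any proper subset of $T$ omits some $\pi_{i,j}$, no proper subset is inversion-complete, so $T$ is minimally inversion-complete.

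The only genuinely delicate point is the completeness argument for the low and high inversions: one must notice that the transversal covers these \emph{not} through their own (nonexistent) defining $F$-set, but as a by-product of the forced positions of $i$ (last among the small values, at position $c+1$) and of $j$ (first among the large values, at position $c$) in the permutations indexed by a matching first or second coordinate. Minimality, by contrast, is essentially immediate once the per-set uniqueness of the crossing inversion is invoked.
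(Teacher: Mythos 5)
Your proposal is correct and follows essentially the same route as the paper's proof: the same three-way case split of inversions relative to the threshold $c$ (crossing, low, high), the same covering witnesses $\pi_{i,j'}$ and $\pi_{i',j}$ for the non-crossing cases, and the same appeal to the uniqueness of the crossing inversion within each $F_{i,c,j}$ to establish minimality. Your writeup is merely a bit more explicit about the forced positions and the existence of the required index choices.
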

\begin{proof}
Given such a transversal~$T$, let $t_{i,j}$ denote the permutation in $T\cap F_{i,c,j}$.
For every inversion $(j,i)\in F_n$, we consider the relative positions of $i$ and $j$ with respect to $c$:
\begin{itemize}
\item If $i\le c<j$, then $t_{i,j}$ is the unique permutation in~$T$ that covers the inversion~$(j,i)$.
\item If $i<j\le c$, then the inversion $(j,i)$ is covered by every $t_{i,j'}\in T$ with $j'>c$.
\item Else, $c+1\le i<j$, then the inversion $(j,i)$ is covered by every $t_{i',j}\in T$ with $i'\le c$.
\end{itemize}
Therefore, $T$ is inversion-complete and
for every $i\le c<j$ the inversion $(j,i)$, covered by $t_{i,j}$, is critical.
This implies that $T$ is minimally inversion-complete.
\end{proof}
For a fixed $c$ such that $1\le c < n$, there are
$c(n-c)$ subsets $F_{i,c,j}$ (with $i\le c<j$) (and these subsets are nonempty and pairwise disjoint).
Hence the cardinality of every transversal $T$ satisfies
$|T| = |\calF_c| = c(n-c) \le \lfloor\frac{n^2}{4}\rfloor$,
with equality iff $c\in\big\{\lfloor\frac{n}{2}\rfloor,\lceil\frac{n}{2}\rceil\big\}$.
Combining with Lemma~\ref{cor:inversions}, we obtain:
\begin{corollary}\label{cor:inv_transv__opt}
  For every $n\ge 4$, $\gamma_I(n) = \lfloor n^2 /4 \rfloor$ and,
  for every $c\in\big\{\lfloor\frac{n}{2}\rfloor,\lceil\frac{n}{2}\rceil\big\}$,
  every transversal $T$ of the family $\calF_c = \{F_{i,c,j} : 1 \le i \le c < j \le n\}$
  is a maximum-cardinality minimal inversion-complete subset of~$S_n$.
\end{corollary}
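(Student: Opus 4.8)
The plan is to obtain both assertions at once by matching the Mantel upper bound of Corollary~\ref{cor:inversions} against an explicit family of minimal inversion-complete sets that attains it. Since $\gamma_I(n)\le\lfloor n^2/4\rfloor$ is already established, it suffices to exhibit, for each admissible $c$, a minimal inversion-complete subset of cardinality exactly $\lfloor n^2/4\rfloor$; the transversals of $\calF_c$ are the natural candidates, and both the cardinality formula $\gamma_I(n)=\lfloor n^2/4\rfloor$ and the optimality of these transversals will then follow together.

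First I would fix $c\in\{\lfloor n/2\rfloor,\lceil n/2\rceil\}$ and apply Lemma~\ref{lem:inv_transv_OK}, which already guarantees that any transversal $T$ of $\calF_c=\{F_{i,c,j}:1\le i\le c<j\le n\}$ is minimally inversion-complete. The only remaining property of $T$ to pin down is its cardinality. Because the sets $F_{i,c,j}$ with $i\le c<j$ are nonempty and pairwise disjoint, a transversal selects exactly one permutation from each of them, so $|T|=|\calF_c|=c(n-c)$. I would then record the elementary identity $c(n-c)=\lfloor n/2\rfloor\lceil n/2\rceil=\lfloor n^2/4\rfloor$ for $c=\lfloor n/2\rfloor$ (and symmetrically for $c=\lceil n/2\rceil$), so that $T$ has cardinality exactly $\lfloor n^2/4\rfloor$.

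Combining these observations with Corollary~\ref{cor:inversions} finishes the argument: the transversal $T$ is a minimal inversion-complete set meeting the upper bound, so $\gamma_I(n)=\lfloor n^2/4\rfloor$, and every such $T$ is therefore of maximum cardinality. There is no serious obstacle here, since all of the structural work has already been carried out in Lemma~\ref{lem:inv_triangle_free}, Corollary~\ref{cor:inversions}, and Lemma~\ref{lem:inv_transv_OK}; the corollary is essentially a bookkeeping assembly. The only points that merit a moment's care are the verification that $c\mapsto c(n-c)$ attains its maximum precisely at $c\in\{\lfloor n/2\rfloor,\lceil n/2\rceil\}$ with value $\lfloor n^2/4\rfloor$, and the remark that $|\calF_c|=c(n-c)$ so that a transversal genuinely attains---rather than merely respects---the Mantel bound.
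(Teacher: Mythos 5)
Your proposal is correct and follows essentially the same route as the paper: both count $|T|=|\calF_c|=c(n-c)$ using the disjointness and nonemptiness of the sets $F_{i,c,j}$, note that this equals $\lfloor n^2/4\rfloor$ exactly when $c\in\big\{\lfloor\frac{n}{2}\rfloor,\lceil\frac{n}{2}\rceil\big\}$, and combine Lemma~\ref{lem:inv_transv_OK} with the Mantel upper bound of Corollary~\ref{cor:inversions}.
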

Part \textit{(i)} of Theorem~\ref{thm:inversions} follows.
To prove
parts \textit{(ii)} and~\textit{(iii)},
we invoke the ``strong form'' of Mantel's Theorem \cite{Mantel,Turan}:
an $n$-vertex triangle-free graph has the maximum number $\lfloor n^2/4\rfloor$ of edges
iff it is a
\emph{balanced} bipartite graph,
i.e., with $\lfloor\frac{n}{2}\rfloor$ vertices on one side
and $\lceil\frac{n}{2}\rceil$ on the other.
\begin{lemma}\label{lem:inv_transv_all}
  For $n\ge 4$, a subset of~$S_n$ is a maximum-cardinality minimal inversion-complete subset iff
  it is a transversal of the family $\calF_c$ for some
  $c\in\big\{\lfloor\frac{n}{2}\rfloor,\lceil\frac{n}{2}\rceil\big\}$.
\end{lemma}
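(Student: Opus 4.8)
I would prove the nontrivial ``only if'' direction, since the ``if'' direction is exactly Corollary~\ref{cor:inv_transv__opt}. So fix a maximum-cardinality minimal inversion-complete $Q\in\calQ^*_n$ and let $G_Q=([n],E_Q)$ be a critical selection graph. By Lemma~\ref{lem:inv_triangle_free} it is triangle-free, and $|E_Q|=|Q|=\lfloor n^2/4\rfloor$, so the strong form of Mantel's Theorem stated above forces $G_Q$ to be a \emph{balanced} complete bipartite graph $K_{L,R}$ for some bipartition $\{L,R\}$ of $[n]$ with $\{|L|,|R|\}=\{\lfloor n/2\rfloor,\lceil n/2\rceil\}$. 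Call a pair $\{a,b\}$ with $a\in L$, $b\in R$ a \emph{cut pair}, and its inversion a \emph{cut inversion}; write $a\prec b$ when $a$ precedes $b$ in a given permutation. Since $G_Q=K_{L,R}$, \emph{every} cut pair is an edge of $G_Q$, hence a selected critical inversion covered by a unique permutation of $Q$. The first easy step is to deduce that each $\pi\in Q$ covers exactly one cut inversion (its own selected critical inversion): any further cut inversion covered by $\pi$ would be some other permutation's uniquely covered critical inversion, a contradiction.

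The crux is to show that $\{L,R\}$ is an \emph{interval} partition $\{[c],\{c+1,\dots,n\}\}$. I would encode the partition as a word $s_1s_2\cdots s_n$ over $\{L,R\}$, where $s_v$ is the part of the value~$v$, and prove that $s$ has at most two maximal blocks. For the block bound, fix a cut pair $\{i,j\}$ with $i<j$: the ``exactly one cut inversion'' property says that in $\pi=q_{j,i}$ we have $j\prec i$ while every \emph{other} cut pair has its smaller element before its larger one; an increasing chain $i<x_1<x_2<j$ in which consecutive values form cut pairs would then force $i\prec x_1\prec x_2\prec j$, contradicting $j\prec i$. Such a chain exists precisely when $s$ contains an alternating subsequence $LRLR$ or $RLRL$, so $s$ can have at most three blocks. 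To exclude three blocks, note that the middle block would then be an entire part, hence of size $\ge 2$ by balance (as $n\ge4$); letting $u$ be the last value of the first block and $v$ the first value of the third block yields a same-part pair $u<v$ with at least two opposite-part values $r_1<r_2$ strictly between them. The within-part inversion $(v,u)$ must be covered by some $\pi$, i.e.\ $v\prec u$; but the two edge-disjoint increasing cut-chains $u\prec r_1\prec v$ and $u\prec r_2\prec v$ cannot both be ``broken'' by $\pi$'s single reversed cut pair, so one of them forces $u\prec v$, a contradiction. Hence $s$ has exactly two blocks and $\{L,R\}=\{[c],\{c+1,\dots,n\}\}$ with $c\in\{\lfloor n/2\rfloor,\lceil n/2\rceil\}$. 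I expect this interval-partition step, and in particular the two-chains argument eliminating three-block partitions, to be the main obstacle.

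It then remains to show that each $\pi=q_{j,i}$ lies in $F_{i,c,j}$. With the interval cut in hand, ``all cut pairs except $\{i,j\}$ are sorted and $j\prec i$'' translates directly into: every element of $[c]\setminus\{i\}$ precedes $j$, which precedes $i$ (the reversed pair), which in turn precedes every element of $\{c+1,\dots,n\}\setminus\{j\}$, the relation $[c]\setminus\{i\}\prec i$ following by transitivity through $j$. This is exactly the defining order of $F_{i,c,j}$, so $\pi\in F_{i,c,j}$. Finally, the cut pairs are precisely the index set $\{(i,j):1\le i\le c<j\le n\}$ of $\calF_c$, and the sets $F_{i,c,j}$ are pairwise disjoint, so $Q$ contains exactly one permutation from each $F_{i,c,j}$; that is, $Q$ is a transversal of $\calF_c$, completing the characterization.
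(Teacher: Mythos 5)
Your proof is correct and follows essentially the same route as the paper: the strong form of Mantel's Theorem gives a balanced complete bipartite critical selection graph, an interleaving argument forces the bipartition to be $\{[c],\{c+1,\dots,n\}\}$, and the ``all cut pairs other than $(j,i)$ are sorted'' property places each $q_{j,i}$ in $F_{i,c,j}$. Your two sub-arguments for the interval step (forbidding an $LRLR$ alternation via a sorted chain $i\prec x_1\prec x_2\prec j$, and the two disjoint chains $u\prec r_1\prec v$ and $u\prec r_2\prec v$ that cannot both be broken by a single reversed cut pair) are precisely the paper's two cases $1<j<i<k$ and $1<j<k<i$, merely organized around the block structure of the partition word rather than anchored at the index~$1$.
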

\begin{proof}
Sufficiency was established by Lemma~\ref{lem:inv_transv_OK}.
To prove necessity, let $n\ge 4$ and consider
any $Q\in\calQ^*_n$
and a corresponding critical selection graph $G_Q$.
By Lemma~\ref{lem:inv_triangle_free}, Corollary~\ref{cor:inv_transv__opt}, and the strong form of Mantel's Theorem,
$G_Q$ is a
balanced complete bipartite graph.
We first claim that the side $W$ of~$G_Q$ that contains index~$1$ must be $W = [c]$
with $c\in\big\{\lfloor\frac{n}{2}\rfloor,\lceil\frac{n}{2}\rceil\big\}$.
For this, consider (since $n\ge 4$) any three indices $i > 1$ in $W$ and $j < k$ on the other side.
Thus $(j,1)$ and $(k,1)$ are critical inversions.
Furthermore, the edges $\{i,j\}$ and $\{i,k\}$ in~$G_Q$ are also defined by critical inversions,
which depend on the position of index~$i$ relative to $j$ and~$k$:
\begin{itemize}
\item If $1 < j < i < k$, then $(i,j)$ and $(k,i)$ are critical inversions.
  Then every permutation $\pi\in Q\setminus\set{q_{j,1}, q_{i,j}, q_{k,i}}$ has
  $1$ before $j$ before $i$ before $k$, and thus does not cover the inversion $(k,1)$.
  Thus $(k,1)$ cannot be a critical inversion, a contradiction.
\item If $1 < j < k < i$, then $(i,j)$ and $(i,k)$ are critical inversions.
  On one hand, every permutation $\pi\in Q' = Q\setminus\set{q_{j,1}, q_{i,j}}$ has
  $1$ before $j$ before $i$, and thus does not cover the inversion $(i,1)$.
  Similarly, every permutation $\pi\in Q'' = Q\setminus\set{q_{k,1}, q_{i,k}}$ has
  $1$ before $k$ before $i$, and thus does not cover the inversion $(i,1)$ either.
  Therefore $Q = Q' \cup Q''$ does not cover the inversion $(i,1)$, a contradiction.
\end{itemize}
This implies that we must have $1 < i < j < k$, i.e., that $i < j$ for every $i\in W$ and every $j\in [n]\setminus W$.
This proves our claim that $W = [c]$ for some $c$ which, by the strong form of Mantel's Theorem,
must be $\lfloor\frac{n}{2}\rfloor$ or $\lceil\frac{n}{2}\rceil$.

As a consequence, $Q = \{q_{j,i} : 1 \le i \le c < j \le n\}$.
Every $q_{j,i}\in Q$ must have $j$ before $i$, and also $h$ before $j$ for every $h\in[c]\setminus\{i\}$
(for otherwise $q_{j,i}$ would also cover the inversion $(j,h)$, contradicting the fact that
$q_{j,h}$ is the unique permutation in~$Q$ that covers~$(j,h)$) and
$i$ before $k$ for every $k\in\{c+1,\dots,n\}\setminus\{j\}$
(for otherwise $q_{j,i}$ would also cover the inversion $(k,i)$).
Therefore $q_{j,i}\in F_{i,c,j}$, and thus $Q$ is a transversal of $\calF_c$.
The proof is complete.
\end{proof}

Parts \emph{(ii)} and~\textit{(iii)} of Theorem~\ref{thm:inversions} now follow, noting that:
(1) $\calF_c$ consists of $c(n-c)$ pairwise disjoint subsets $F_{i,c,j}$;
(2) $c\in\big\{\lfloor\frac{n}{2}\rfloor,\lceil\frac{n}{2}\rceil\big\}$; and
(3) for $n$ odd, all subsets $F_{i, \lfloor n/2\rfloor, j}$ and $F_{i', \lceil n/2\rceil, j'}$
are pairwise disjoint (indeed, with $n$ odd,
every $\pi\in F_{i, \lfloor n/2\rfloor, j}$ has $\pi(c+1)\in [c]$ while
every $\pi\in F_{i', \lceil n/2\rceil, j'}$ has $\pi(c+1)\in [n]\setminus [c]$).

\begin{remark}
Thus we have
$\calQ^*_2 = 1$, $\calQ^*_3 = 3$, $\calQ^*_4 = 1$, $\calQ^*_5 = 128$ and, as noted in the Introduction,
the asymptotic growth rate $|\calQ^*_n| = 2^{\theta(n^3 \log n)}$.
\end{remark}

\section{minimal Pair-Complete Sets of Permutations}\label{sec:pairs}
In this Section we prove Theorem~\ref{thm:pairs}.
To simplify the presentation, let $\mu(n) := \max\set{n, \lfloor n^2 /4 \rfloor}$.
For $n=2$, the unique cover of the two pairs $(1,2)$ and $(2,1)$
is~$S_2$ itself, hence $\gamma_P(2) = 2 = \mu(2)$
%
and $\calP^*_2 = \{S_2\}$.

\par\medskip\nobreak
Note that, as for inversions, given any minimal pair-complete subset $P$ of~$S_n$, for every permutation $\pi\in P$
there exists a \emph{critical pair} $(i,j)\in F_n$ which is covered by $\pi$ and by no other permutation in $P$.
Observe however that, in contrast with inversion-completeness, the notion of pair-com\-plete\-ness does not assume
any particular order of the indices.
Thus, if $P\subseteq S_n$ is (minimally) pair-complete then for any permutation $\tau\in S_n$ of the index set~$[n]$,
the set
$\tau\circ P = \{\tau\circ\pi :
\pi\in P\}$ is also (minimally) pair-complete.
(Indeed, $\pi$ covers $(i,j)$ iff
$\tau\circ\pi$
covers $(\tau(i),\tau(j))$.)

For $n=3$ consider the set $P_3 := \set{123, 231, 312}$.
It is easily verified that $P_3$ is pair-complete and
the pairs $(1,3)$, $(2,1)$ and $(3,2)$ are critical pairs covered by the permutations 123, 231 and 312, respectively.
Hence
$P_3\in\calP^*_3$ and thus
$\gamma_P(3) \ge |P_3| = 3 = \mu(3)$.
To verify the converse inequality, viz., $\gamma_P(3) \le \mu(3)$, consider any
$P\in\calP^*_3$:
by the preceding observation, we may assume, w.l.o.g., that
$P$ contains the identity permutation $\pi_1 = \Id_3$.
This permutation $\pi_1$ covers all three pairs $(i,j)$ with $i<j$.
Then the permutation $\pi_2\in P$ that covers the pair $(3,1)$ must also (depending of the position of index~2)
cover at least one of the pairs $(2,1)$ or $(3,2)$.
Hence there is at most one pair which is not covered by $\set{\pi_1,\pi_2}$, and thus $\gamma_P(3) = |P| \le 3 = \mu(3)$,
implying $\gamma_P(3) = \mu(3)$.
Therefore part \textit{(i)} of Theorem ~\ref{thm:pairs} holds for $n\in\set{2,3}$.

\begin{lemma}\label{lem:pair_transv_OK}
If $n\ge 4$, for every permutation $\tau\in S_n$ of the index set $[n]$ and
every maximum-cardinality minimal inversion-complete set $Q\subset S_n$,
the set
$\tau\circ Q$
is minimally pair-complete.
\end{lemma}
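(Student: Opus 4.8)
The plan is to reduce to the case $\tau = \Id_n$. As observed just before the statement, both pair-completeness and minimal pair-completeness are preserved under relabeling the index set: $P$ is (minimally) pair-complete iff $\tau\circ P$ is. Hence it suffices to prove that $Q$ itself is minimally pair-complete; the conclusion for an arbitrary $\tau\in S_n$ then follows immediately. By Lemma~\ref{lem:inv_transv_all}, a maximum-cardinality minimal inversion-complete $Q$ is a transversal of $\calF_c$ for some $c\in\{\lfloor n/2\rfloor,\lceil n/2\rceil\}$; write $Q=\{q_{j,i}:1\le i\le c<j\le n\}$ with $q_{j,i}\in F_{i,c,j}$. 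I will use the explicit position structure of permutations in $F_{i,c,j}$: the indices in $[c]\setminus\{i\}$ occupy positions $1,\dots,c-1$, then $j$ sits at position $c$, then $i$ at position $c+1$, and the remaining large indices fill positions $c+2,\dots,n$.

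The first and main task is pair-completeness. Since $Q$ is inversion-complete it already covers every pair $(j,i)$ with $j>i$, so I only need to cover each ``non-inversion'' $(i,j)$ with $i<j$, i.e.\ to exhibit a permutation of $Q$ in which $i$ precedes $j$. I would split into three cases according to the positions of $i$ and $j$ relative to $c$: (a) if $i<j\le c$, then in $q_{j',j}$ for any $j'>c$ the index $i$ lies among positions $1,\dots,c-1$ while $j$ sits at position $c+1$; (b) if $c<i<j$, then in $q_{i,i'}$ for any $i'\le c$ the index $i$ sits at position $c$ while $j$ lies beyond position $c+1$; (c) if $i\le c<j$, then choosing any large index $j''\ne j$ with $j''>c$, the permutation $q_{j'',i}$ places $i$ at position $c+1$ and $j$ beyond it. In each case $i$ precedes $j$, so the pair is covered, and together with inversion-completeness this yields pair-completeness.

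This is precisely the step where the hypothesis $n\ge 4$ is needed, and I expect it to be the only genuinely delicate point: it guarantees $c\ge 2$ and $n-c\ge 2$ (since $\lfloor n/2\rfloor\ge 2$ and $\lceil n/2\rceil\le n-2$), so that the required permutations actually exist — in particular a \emph{second} large index $j''\ne j$ in case~(c), and nonempty blocks of positions $1,\dots,c-1$ and $c+2,\dots,n$ in cases~(a) and~(b) respectively. The bookkeeping of which index lands in which block of positions is the part to carry out carefully.

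Minimality then comes essentially for free. For each $q_{j,i}\in Q$, minimal inversion-completeness (cf.\ Lemma~\ref{lem:inv_transv_OK}) provides a critical inversion $(j,i)$ that is covered by $q_{j,i}$ and by no other permutation of~$Q$; but covering the inversion $(j,i)$ is literally the same as covering the pair $(j,i)$. Hence $(j,i)$ is a critical pair for $q_{j,i}$, so deleting any permutation from $Q$ leaves its critical pair uncovered, and no proper subset of $Q$ is pair-complete. Combining pair-completeness with this minimality shows $Q$, and therefore $\tau\circ Q$, is minimally pair-complete.
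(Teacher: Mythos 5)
Your proof is correct and takes essentially the same route as the paper's: reduce to $\tau = \Id_n$ via the relabeling observation, use Lemma~\ref{lem:inv_transv_all} to write $Q$ as a transversal of $\calF_c$, establish pair-completeness by the same three-case analysis on the positions of $i$ and $j$ relative to $c$, and get minimality by reinterpreting the critical inversions $(j,i)$ with $i\le c<j$ as critical pairs. Your explicit remark that $n\ge 4$ guarantees $c\ge 2$ and $n-c\ge 2$ (so the needed witnesses exist) is a point the paper leaves implicit, but the argument is the same.
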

\begin{proof}
By a preceding observation, it suffices to prove that, for $n\ge 4$,
every maximum-cardinality minimal inversion-complete set $Q\in S_n$ is minimally pair-complete.
By Lemma~\ref{lem:inv_transv_all}, every such $Q$ must be
a transversal of~$\calF_c$ for some $c\in\big\{\lfloor\frac{n}{2}\rfloor,\lceil\frac{n}{2}\rceil\big\}$.
Consider any pair $(i,j)\in A_n$ and, w.l.o.g., $i<j$:
\begin{itemize}
\item If $i\le c < j$ then $q_{j,i}\in F_{i,c,j}\cap Q$ is the unique permutation in~$Q$ that covers the inversion $(j,i)$,
    and every other permutation in~$Q$ covers~$(i,j)$.
\item If $i < j \le c$ then, for every $k\in \{c+1,\dots,n\}$, $q_{k,i}$ covers $(j,i)$ and $q_{k,j}$ covers $(i,j)$.
\item Else, $c < i < j$ and, dually, for every $h\in [c]$, $q_{i,h}$ covers $(i,j)$ and $q_{j,h}$ covers $(j,i)$.
\end{itemize}
Therefore $Q$ is pair-complete, and every pair $(j,i)$ with $i\le c < j$ is critical
and covered by $q_{j,i}\in Q$.
Since $|Q| = \lfloor\frac{n^2}{4}\rfloor = |\{(j,i) : 1\le i\le c < j \le n\}|$,
$Q$ is minimally pair-complete.
\end{proof}

\begin{corollary}\label{cor:pair_LB}
  For every $n\ge 4$, the maximum cardinality $\gamma_P(n)$ of
  a minimal pair-complete subset of~$S_n$ satisfies $\gamma_P(n) \ge \gamma_I(n) = \lfloor n^2 /4 \rfloor$.
\end{corollary}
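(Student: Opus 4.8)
The plan is to observe that Corollary~\ref{cor:pair_LB} follows immediately from Lemma~\ref{lem:pair_transv_OK} together with the value of $\gamma_I(n)$ already established in Corollary~\ref{cor:inv_transv__opt}. The key point is that $\gamma_P(n)$ is defined as the \emph{maximum} cardinality over all minimal pair-complete subsets of~$S_n$; hence to obtain the claimed lower bound it suffices to exhibit a \emph{single} minimal pair-complete subset of cardinality $\lfloor n^2/4\rfloor$.

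First I would invoke Corollary~\ref{cor:inv_transv__opt}, which guarantees that for every $n\ge 4$ there exists a maximum-cardinality minimal inversion-complete subset $Q\subset S_n$ with $|Q| = \gamma_I(n) = \lfloor n^2/4\rfloor$; for instance, any transversal of the family $\calF_c$ with $c = \lfloor n/2\rfloor$ will do. Next I would apply Lemma~\ref{lem:pair_transv_OK} with the identity permutation $\tau = \Id_n$ as the index permutation. Since that lemma asserts that $\tau\circ Q$ is minimally pair-complete for every $\tau\in S_n$, the choice $\tau = \Id_n$ yields that $Q$ itself is a minimal pair-complete subset of~$S_n$.

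Finally, because $Q$ is a minimal pair-complete subset with $|Q| = \lfloor n^2/4\rfloor$, the definition of $\gamma_P(n)$ as a maximum over all such subsets immediately gives $\gamma_P(n) \ge |Q| = \lfloor n^2/4\rfloor = \gamma_I(n)$, which is the desired inequality. There is no genuine obstacle to overcome here: the entire combinatorial content has been absorbed into Lemma~\ref{lem:pair_transv_OK}, and the present corollary simply records the lower-bound half of what will eventually be needed to pin down $\gamma_P(n)$ exactly in Theorem~\ref{thm:pairs}. The only thing to be careful about is to take $\tau = \Id_n$ so that $\tau\circ Q = Q$ retains its cardinality $\lfloor n^2/4\rfloor$, rather than appealing to the more general relabelings $\tau\circ Q$, which are not needed for this bound.
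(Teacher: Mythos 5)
Your proposal is correct and matches the paper's reasoning exactly: the paper states this corollary as an immediate consequence of Lemma~\ref{lem:pair_transv_OK} (which, applied with $\tau=\Id_n$, exhibits a minimal pair-complete set of cardinality $\lfloor n^2/4\rfloor$ furnished by Corollary~\ref{cor:inv_transv__opt}). Nothing is missing.
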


As we did for inversions, to every minimal pair-complete subset $P$ of~$S_n$ and
selection of a critical pair covered by each permutation in~$P$,
we associate a corresponding \emph{critical selection graph} $G_P = ([n], E_P)$ where
$E_P$ is the set of $|P|$ selected critical pairs (one for each permutation in~$P$), considered as undirected edges.
Thus $|E_P| = |P|$.
Let $p_{i,j}$ denote the unique permutation in~$P$ that covers the selected critical pair $(i,j)$.
\begin{lemma}\label{lem:pair_triangle_free}
If $n\ge 4$ and $P\subseteq S_n$ is minimally pair-complete, then every corresponding critical selection graph $G_P$ is triangle-free.
\end{lemma}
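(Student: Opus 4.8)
The plan is to follow the template of Lemma~\ref{lem:inv_triangle_free}, while accounting for the one structural difference between inversions and pairs: a selected critical pair $(i,j)$ of $P$ carries no predetermined orientation, since pair-completeness treats $(i,j)$ and $(j,i)$ symmetrically. Accordingly, suppose for contradiction that three indices $a,b,c$ span a triangle of $G_P$, and record the orientations of the three selected critical pairs joining them. These three ordered pairs form a tournament on $\{a,b,c\}$, which after relabeling is either \emph{transitive} (the critical pairs are $(a,b)$, $(b,c)$ and $(a,c)$) or a directed \emph{$3$-cycle} (the critical pairs are $(a,b)$, $(b,c)$ and $(c,a)$). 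I would dispose of these two cases in turn, using throughout that distinct selected critical pairs are covered by distinct permutations, since exactly one critical pair is selected per permutation.

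For the transitive case I would transcribe the third bullet of Lemma~\ref{lem:inv_triangle_free} almost verbatim. Because $(a,b)$ and $(b,c)$ are critical, every permutation other than $p_{a,b}$ places $b$ before $a$, and every one other than $p_{b,c}$ places $c$ before $b$; hence every $\pi\in P\setminus\{p_{a,b},p_{b,c}\}$ has $c$ before $b$ before $a$ and therefore covers $(c,a)$ rather than $(a,c)$. Consequently $(a,c)$ can be covered only by $p_{a,b}$ or $p_{b,c}$, neither of which equals $p_{a,c}$; this contradicts the assumption that $(a,c)$ is a selected critical pair. Thus no transitive triangle can occur in $G_P$.

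The directed $3$-cycle is the case I expect to be the genuine obstacle, because the clean contradiction above no longer closes. Here criticality of the three pairs forces every $\pi\in P\setminus\{p_{a,b},p_{b,c},p_{c,a}\}$ to place, simultaneously, $b$ before $a$, $c$ before $b$, and $a$ before $c$ — an impossible cyclic order — so that $P=\{p_{a,b},p_{b,c},p_{c,a}\}$ would have to consist of exactly three permutations acting as the three rotations of the order on $\{a,b,c\}$. This is the $|P|=3$ analogue of the circular-shift (``class-(1)'') sets of the Introduction, whose critical selection graphs are longer cycles; the intended role of the hypothesis $n\ge 4$ is precisely to push any such circular structure out to a cycle of length at least four, which is triangle-free. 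The decisive and hardest step is therefore to rule out a bona fide cyclic triangle for $n\ge 4$: I would introduce a fourth index $d\notin\{a,b,c\}$ and track how it interleaves with $a$, $b$ and $c$ across the three rotations, seeking to show that covering every pair involving $d$ while keeping each of $(a,b)$, $(b,c)$ and $(c,a)$ uniquely covered cannot be sustained. Making this interplay between the cyclic orientation and the global size of $P$ precise is, I expect, where the real difficulty of the lemma lies, in sharp contrast to the inversion setting, where value-order pre-orients every edge and the cyclic configuration simply cannot arise.
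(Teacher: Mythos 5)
Your transitive case is correct and is essentially the paper's own argument (its first and third bullets, reorganized); the tournament dichotomy is a clean way to package the same computations, and it correctly isolates the cyclic orientation as the only place where anything beyond the two-edge bookkeeping is needed. The gap is in the cyclic case, and it is not the gap you think it is. Having correctly deduced that a cyclic triangle forces $P=\{p_{a,b},p_{b,c},p_{c,a}\}$ and hence $|P|=3$, you propose to derive a contradiction from $n\ge 4$ alone by adjoining a fourth index $d$. That plan cannot succeed, because the configuration you are trying to exclude actually exists: in $S_4$ the set $P=\{4132,\,2134,\,3241\}$ is minimally pair-complete (every two-element subset misses some pair, e.g.\ $\{4132,2134\}$ misses $(3,1)$), the pairs $(1,2)$, $(2,3)$ and $(3,1)$ are each covered by exactly one of its members, and selecting them as the critical pairs yields a critical selection graph equal to the triangle on $\{1,2,3\}$. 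So no amount of tracking a fourth index will close the cyclic case; read literally, the statement fails for such small $P$.

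What actually kills the cyclic case is a lower bound on $|P|$, not on $n$: the paper opens its proof by invoking Corollary~\ref{cor:pair_LB} to assert $|P|\ge\lfloor n^2/4\rfloor\ge 4$ (a step that is legitimate only when $P$ has maximum cardinality, which is the sole situation in which the lemma is subsequently applied). Once $|P|\ge 4$, your own reduction finishes instantly: $P\setminus\{p_{a,b},p_{b,c},p_{c,a}\}$ is nonempty, and any permutation in it would have to place $b$ before $a$, $c$ before $b$ and $a$ before $c$ simultaneously, which is impossible. The missing ingredient is therefore simply to import that cardinality bound (equivalently, to restrict to $P\in\calP^*_n$, or to add the hypothesis $|P|\ge 4$); the hypothesis $n\ge 4$ enters only through $\lfloor n^2/4\rfloor\ge 4$, not through the existence of a fourth index to interleave.
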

\begin{proof}
Assume $n\ge 4$ and $P\subseteq S_n$ is minimally pair-complete, and let $G_P = ([n], E_P)$ be a corresponding critical selection graph.
By Corollary~\ref{cor:pair_LB}, $|P| \ge \lfloor n^2 /4 \rfloor \ge 4$.
We have to show that for any three indices $i,j,k$ such that $\set{i,j}$ and $\set{j,k}\in E_P$,
we must have $\set{i,k}\not\in E_P$.
\begin{itemize}
\item First, consider the case where both pairs $(i,j)$ and $(j,k)$ are critical.
  In every permutation $\pi\in P\setminus\set{p_{i,j}, p_{j,k}}$ we must thus have $k$ before $j$ before~$i$.
  Since $k$ is before $i$ in all these $|P| -2 \ge 2$ permutations, $(k,i)$ cannot be a critical pair.
  Furthermore $(i,k)$ cannot be a selected critical pair, since it can only be covered by $p_{i,j}$ and $p_{j,k}$,
  for each of which another critical pair has been selected.
  Therefore, as claimed, we cannot have $\set{i,k}$ in $E_P$.
\item A dual argument shows that if both $(j,i)$ and $(k,j)$ are selected critical pairs then $\set{i,k}\not\in E_P$.
\item Now consider the case where $(j,i)$ and $(j,k)$ are selected critical pairs.
  Since $p_{j,i}$ does not cover $(j,k)$, we have $k$ before $j$ before $i$ in $p_{j,i}$, implying that $(k,i)$ cannot be a selected critical pair.
  Similarly, $p_{j,k}$ does not cover $(j,i)$ and therefore
  we must have $i$ before $j$ before $k$ in $p_{j,k}$, implying that $(i,k)$ cannot be a selected critical pair.
  Therefore, as claimed, we cannot have $\set{i,k}$ in $E_P$.
\item A dual argument applies to the remaining case, showing that if both $(i,j)$ and $(k,j)$ are selected critical pairs then $\set{i,k}\not\in E_P$.
\end{itemize}
Thus we must have $\set{i,k}\not\in E_P$.
This completes the proof that $G_P$ is triangle-free.
\end{proof}
These results and Mantel's Theorem imply part \textit{(i)} of Theorem~\ref{thm:pairs}.
They also imply that, for $n\ge 4$, all the sets
$\tau\circ Q$
in Lemma~\ref{lem:pair_transv_OK} are
in~$\calP^*_n$.
To complete the proof of
part~\textit{(ii)}
it now suffices to prove the converse for $n \ge 5$.
\begin{lemma}\label{lem:pair_transv_all}
If $n\ge 5$, a subset $P$ of~$S_n$ is a maximum-cardinality minimal pair-complete subset iff
$P = \tau\circ Q$
for some permutation $\tau$ of the index set $[n]$ and
some maximum-cardinality minimal inversion-complete subset $Q$ of~$S_n$.
\end{lemma}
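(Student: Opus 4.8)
The plan is to prove the two directions separately. Sufficiency (the ``if'') is essentially in hand: Lemma~\ref{lem:pair_transv_OK} shows that each $\tau\circ Q$ is minimally pair-complete, and since $|\tau\circ Q| = |Q| = \lfloor n^2/4\rfloor = \gamma_P(n)$ for $n\ge 5$ (part~\textit{(i)} of Theorem~\ref{thm:pairs}), every such set lies in $\calP^*_n$. So the whole content is the converse. First I would fix $P\in\calP^*_n$, form a critical selection graph $G_P$, and invoke Lemma~\ref{lem:pair_triangle_free} with the strong form of Mantel's Theorem: since $|E_P| = |P| = \lfloor n^2/4\rfloor$ and $G_P$ is triangle-free, $G_P$ is a \emph{balanced} complete bipartite graph with sides $W,\overline W$, $|W| = c\in\{\lfloor n/2\rfloor,\lceil n/2\rceil\}$. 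Orienting each edge by its selected critical pair (from $\alpha$ to $\beta$ when $(\alpha,\beta)$ is the critical pair, i.e. $\alpha$ before $\beta$) yields an orientation $D$ of this graph, exactly one arc per edge; write $p_e$ for the permutation carrying arc $e$.

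The fact that drives everything is a \emph{rigidity} observation: because a selected critical pair is covered by a unique permutation, $p_e$ covers its own critical pair $e$ and \emph{reverses} every other arc of $D$ (it has $\delta$ before $\gamma$ for each arc $\gamma\to\delta\ne e$). The heart of the proof, and the step I expect to be hardest, is to deduce that $D$ is \emph{uniform}: all its arcs cross the bipartition in the same direction. I would establish this in three moves. (a) $D$ is acyclic: any directed cycle in the bipartite $D$ has length at most $2\min(c,n-c)<c(n-c)=|E_P|$ for $n\ge5$, so some edge $e$ lies off it; by rigidity the constraints defining $p_e$ keep $e$ but reverse the whole cycle, so they contain a directed cycle and $p_e$ cannot exist. (b) Reading $D$ along a topological order, if the two sides formed four or more alternating blocks we could pick vertices $u_1\to u_2\to u_3\to u_4$ forming a directed path; the chord $\{u_1,u_4\}$ is then oriented $u_1\to u_4$, and $p_{u_1\to u_4}$, which keeps this chord but reverses the three path arcs, would be forced into the cycle $u_1\to u_4\to u_3\to u_2\to u_1$ — impossible. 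Hence there are at most three blocks. (c) If $D$ were not uniform there would be at least three blocks, hence exactly three, of the form $X,Y,X$; the middle block is then an \emph{entire} side, so by balance it has size $\ge 2$.

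This is where I would close the argument, and where $n\ge5$ is truly used (for $n=4$ the circular-shift sets realize genuinely non-uniform $D$, and indeed move~(a) fails because a directed $4$-cycle already exhausts all edges of $K_{2,2}$). With the middle block $Y$ of size $\ge 2$, choose same-side endpoints $a$ in the first block and $a'$ in the third block. For every $p_e$ and every $v\in Y$ not incident to $e$, rigidity reverses both $a\to v$ and $v\to a'$, placing $a'$ before $v$ before $a$; since $|Y|\ge2$ at most one $v$ can be ``protected'' by $e$, so \emph{every} $p_e$ has $a'$ before $a$. Thus the pair $(a,a')$ is covered by no permutation of $P$, contradicting pair-completeness. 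Therefore $D$ is uniform.

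It then remains to harvest the conclusion. Uniformity gives a source side $A$ and a sink side $B$, and rigidity pins each $p_e$ (with $e=\alpha\to\beta$, $\alpha\in A$, $\beta\in B$) to the shape ``the elements of $B\setminus\{\beta\}$ in some order, then $\alpha$, then $\beta$, then the elements of $A\setminus\{\alpha\}$ in some order.'' Setting $c=|B|\in\{\lfloor n/2\rfloor,\lceil n/2\rceil\}$ and letting $\tau$ be any relabeling with $\tau([c]) = B$ and $\tau(\{c+1,\dots,n\}) = A$, the permutation $\tau^{-1}\circ p_e$ lies in $F_{\tau^{-1}(\beta),\,c,\,\tau^{-1}(\alpha)}$; as $e$ runs over the $c(n-c)$ arcs these land one per member of $\calF_c$. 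Hence $Q:=\tau^{-1}\circ P$ is a transversal of $\calF_c$, which by Lemma~\ref{lem:inv_transv_all} is a maximum-cardinality minimal inversion-complete set, and $P = \tau\circ Q$, as required.
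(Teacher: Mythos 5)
Your proof is correct, and its skeleton coincides with the paper's: both pass from $P\in\calP^*_n$ to a critical selection graph, use Lemma~\ref{lem:pair_triangle_free} and the strong form of Mantel's Theorem to get a balanced complete bipartite graph, orient each edge by its selected critical pair, prove the resulting digraph is acyclic and then that its orientation is uniform across the bipartition, and finally relabel. The two sub-steps are, however, executed differently. For acyclicity, the paper propagates ``before'' constraints along a directed path and counts that $|P|-(k-1)\ge 2$ permutations remain, so the closing pair cannot be critical; you instead note that a directed cycle in the bipartite digraph has at most $2\lfloor n/2\rfloor < \lfloor n^2/4\rfloor$ arcs, pick an edge $e$ off the cycle, and observe that $p_e$ would have to reverse every cycle arc --- an immediate contradiction. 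For uniformity, the paper fixes a source vertex $i$ and does a local two-case analysis on the orientations of a pair of edges $\{j,k\},\{j,l\}$; you argue globally via a topological order: at most three same-side blocks can occur (a fourth would create a directed path $u_1\to u_2\to u_3\to u_4$ whose chord forces a cyclic constraint on $p_{u_1\to u_4}$), and a three-block configuration $X,Y,X$ with $|Y|\ge 2$ would leave the pair $(a,a')$ uncovered by every permutation of $P$. Both of your contradictions correctly isolate where $n\ge 5$ is needed (for $n=4$ a directed $4$-cycle exhausts $K_{2,2}$, matching the circular-shift exceptions). Your ending is also slightly more explicit: rather than just noting that every critical pair of $\tau^{-1}\circ P$ is an inversion, you pin down the exact shape of each $p_e$ and exhibit $\tau^{-1}\circ P$ directly as a transversal of $\calF_c$, which makes the appeal to Lemma~\ref{lem:inv_transv_all} essentially self-verifying. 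What your route buys is a cleaner, more structural picture of the orientation (the ``rigidity'' observation doing all the work uniformly); what the paper's route buys is shorter case analyses that avoid introducing the block decomposition.
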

\begin{proof}
Sufficiency was just established.
To prove necessity, let $n\ge 5$ and consider
any $P\in\calP^*_n$
and a corresponding critical selection graph $G_P$.
By Lemma~\ref{lem:pair_triangle_free}, Theorem~\ref{thm:pairs}~\textit{(i)}, and the strong form of Mantel's Theorem,
$G_P$ is a
balanced complete bipartite graph.

Now, also consider the associated  critical selection \emph{digraph} (directed graph) $D_P = ([n], A_P)$,
wherein each edge $\set{i,j}$ is directed as arc $(i,j)\in A_P$ if the pair $(i,j)$ is critical.
If $(i,j)\in A_P$, then the reverse pair $(j,i)$ must be covered by every permutation in $P\setminus\set{p_{i,j}}$;
since $|P| -1 = \lfloor n^2 /4 \rfloor - 1 \ge 2$ when $n \ge 5$,
$(j,i)$ cannot be critical, i.e., $(j,i)\not\in A_P$.
Thus, every edge $\set{i,j}$ in the underlying graph $G_P$ of~$D_P$ corresponds to exactly one arc,
$(i,j)$ or $(j,i)$, in~$D_P$.
We now prove that when $n \ge 5$ the digraph $D_P$ is acyclic, i.e., it does not contain any (directed) circuit.
Indeed, if $D_P$ contains a directed path $(i(1),\dots,i(k))$, then
every permutation $\pi\in Q\setminus\{p_{i(1),i(2)},\, p_{i(2),i(3)}\dots,\, p_{i(k-1),i(k)}\}$ has
$i(1)$ after $i(2)$ after $i(3)$, etc, after $i(k-1)$ after $i(k)$, and thus $i(k)$ before $i(1)$.
Since $|P| - (k-1) \ge \lfloor\frac{n^2}{4}\rfloor - (n-1) \ge 2$ when $n\ge 5$, pair $(i(k),i(1))$ cannot be critical,
and thus $(i(k),i(1))\not\in A_P$.

Since digraph $D_P$ is acyclic, there is at least one vertex $i$ with in-degree zero.
Since the underlying graph $G_P$ is a
balanced complete bipartite graph,
the side $W$ of~$G_Q$ that contains index~$i$ has cardinality
$|W| \in\big\{\lfloor\frac{n}{2}\rfloor,\,   \lceil\frac{n}{2}\rceil\big\}$.
Consider any indices $j\in W$ and $k \not= l$ on the other side.
Since vertex~$i$ has no entering arc, arcs $(i,k)$ and $(i,l)$ are in~$A_P$, and
we consider the possible orientations of the edges $\{j,k\}$ and $\{j,l\}$:
\begin{itemize}
\item If both edges are oriented into~$j$, i.e., $(k,j)$ and $(l,j)$ in~$A_P$, then, on one hand,
  every permutation $\pi\in P' = P\setminus\set{p_{k,j},\, p_{i,k}}$ has
  $j$ before $k$ before~$i$, and thus does not cover the pair $(i,j)$.
  Similarly, every permutation $\pi\in P'' = P\setminus\set{p_{l,j},\, p_{i,l}}$ has
  $j$ before $l$ before~$i$, and thus does not cover the pair $(i,j)$ either.
  Therefore $P = P' \cup P''$ does not cover the pair $(i,j)$, a contradiction.
\item If one of these two edges is oriented into~$j$ and the other one from~$j$,
  w.l.o.g., $(k,j)$ and $(j,l)$ in~$A_P$, then every permutation
  $\pi\in P\setminus\set{p_{j,l},\, p_{k,j},\, p_{i,k}}$ has
  $l$ before $k$ before~$i$, and thus does not cover the pair $(i,l)$.
  Thus $(i,l)$ cannot be a critical inversion, a contradiction.
\end{itemize}
Thus we must have both $(j,k)$ and $(j,l)$ in~$A_P$.
This implies that all pairs $(j,k)$ with $j\in W$ and
$k\in \overline{W} := [n]\setminus W$
define arcs in~$A_P$, i.e., are critical.
Let $c := |\overline{W}|\in\big\{\lfloor\frac{n}{2}\rfloor,\, \lceil\frac{n}{2}\rceil\big\}$
and consider any permutation $\tau$ that sends
$[c]$ to~$\overline{W}$ (and thus $\overline{[c]}$ to~$W$):
every critical pair in
$Q = \tau^{-1}\circ P$
is an inversion, hence $Q$ is minimally inversion-complete.
Since $|Q| = |P| = \gamma_P(n) = \gamma_I(n)$, it has maximum cardinality.
This completes the proof.
\end{proof}

It remains to prove:
\begin{lemma}\label{lem:bijection}
\emph{(Part \textit{(iii)} of Theorem~\ref{thm:pairs}.)}
For all $n\ge 5$ there is a one-to-one correspondence between $\calP^*_n$
and the Cartesian product $\binom{[n]}{\lfloor n/2\rfloor} \times \calQ^*_n$.
\end{lemma}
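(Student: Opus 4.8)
The plan is to upgrade Lemma~\ref{lem:pair_transv_all} — which only gives that each $P\in\calP^*_n$ equals $\tau\circ Q$ for \emph{some} $\tau\in S_n$ and $Q\in\calQ^*_n$ — into an explicit bijection $\Phi:\binom{[n]}{\lfloor n/2\rfloor}\times\calQ^*_n\to\calP^*_n$, and then to check that $\Phi$ and an explicitly described inverse compose to the identity. The only real content is to make the representation $P=\tau\circ Q$ rigid, which I will do by reading off an intrinsic structure from $P$ alone.

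First I would attach to each $P\in\calP^*_n$ a canonical ordered balanced partition of $[n]$. Call a pair \emph{critical for $P$} if it is covered by exactly one permutation of $P$, and let $\mathrm{Crit}(P)$ be the set of all such pairs; this depends on $P$ alone. Writing $P=\tau\circ Q$ with $Q$ a transversal of some $\calF_c$ (Lemma~\ref{lem:pair_transv_all}) and using that $\pi$ covers $(i,j)$ iff $\tau\circ\pi$ covers $(\tau(i),\tau(j))$, one gets $\mathrm{Crit}(P)=\{(\tau(x),\tau(y)):(x,y)\in\mathrm{Crit}(Q)\}$. A short direct computation then identifies $\mathrm{Crit}(Q)$ exactly: each crossing inversion $(j,i)$ with $i\le c<j$ is covered only by $q_{j,i}$, while every other pair is covered by at least two permutations of $Q$ (as $n\ge5$) and so is not critical. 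Hence $\mathrm{Crit}(P)$ is the arc set of a complete bipartite digraph all of whose arcs run from one side $W_P$ (the tails) to the other side $\overline W_P$ (the heads), with $|\overline W_P|=c$ and $\{|W_P|,|\overline W_P|\}=\{\lfloor n/2\rfloor,\lceil n/2\rceil\}$; this is the desired canonical ordered partition.

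Next I would convert $(W_P,\overline W_P)$ into the two coordinates. Set $S(P)\in\binom{[n]}{\lfloor n/2\rfloor}$ to be whichever side has cardinality $\lfloor n/2\rfloor$, with the tie-breaking convention $S(P):=\overline W_P$ when $n$ is even. For the forward map, given $S\in\binom{[n]}{\lfloor n/2\rfloor}$ and $Q\in\calQ^*_n$ of level $c=c(Q)$, put $\overline W:=S,\ W:=[n]\setminus S$ if $c=\lfloor n/2\rfloor$ and $W:=S,\ \overline W:=[n]\setminus S$ if $c=\lceil n/2\rceil$; let $\tau_{S,Q}\in S_n$ be the canonical permutation mapping $[c]$ onto $\overline W$ and $\{c+1,\dots,n\}$ onto $W$, each order-preservingly; and define $\Phi(S,Q):=\tau_{S,Q}\circ Q$. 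By Lemma~\ref{lem:pair_transv_OK} (and $|\tau_{S,Q}\circ Q|=\lfloor n^2/4\rfloor$) this lies in $\calP^*_n$, and by the previous step its critical set is the bipartite arc set from $W$ to $\overline W$, so $(W_{\Phi(S,Q)},\overline W_{\Phi(S,Q)})=(W,\overline W)$, giving $S(\Phi(S,Q))=S$ and $\tau_{S,Q}^{-1}\circ\Phi(S,Q)=Q$. Conversely, for $P\in\calP^*_n$ the partition above determines $c=|\overline W_P|$ and hence a permutation $\tau_P$, and the concluding argument of Lemma~\ref{lem:pair_transv_all} shows $Q_P:=\tau_P^{-1}\circ P\in\calQ^*_n$; one checks $\tau_P=\tau_{S(P),Q_P}$, whence $\Phi(S(P),Q_P)=P$. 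These two verifications make $\Phi$ a bijection.

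The step I expect to be the main obstacle is guaranteeing that this partition is genuinely canonical, which has two parts. One is that the orientation of $\mathrm{Crit}(P)$ is \emph{uniform} (every arc runs $W_P\to\overline W_P$): this is precisely the acyclicity/source-vertex argument inside the proof of Lemma~\ref{lem:pair_transv_all}, together with the fact that $|\mathrm{Crit}(P)|=\lfloor n^2/4\rfloor$ forces $\mathrm{Crit}(P)$ to be the \emph{entire} bipartite arc set, so no reverse arc can occur. The other is the parity bookkeeping: for odd $n$ the side of size $\lfloor n/2\rfloor$ pins down $S(P)$ unambiguously, but the level $c(Q)$ is exactly what records whether $S=W$ or $S=\overline W$ (matching the two sub-families of $\calQ^*_n$ in Theorem~\ref{thm:inversions}\,\textit{(iii)}), while for even $n$ both sides have size $n/2$ and the $\overline W$-convention is what keeps $\tau_{S,Q}$, and hence $\Phi$, well defined and invertible. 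Once uniform orientation and this accounting are in hand, injectivity and surjectivity of $\Phi$ are routine.
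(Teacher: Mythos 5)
Your proposal is correct and follows essentially the same route as the paper: both extract from $P$ the intrinsic ordered balanced partition determined by the (uniformly oriented, complete bipartite) set of critical pairs, use the unique order-preserving permutation between $[c]$ and the appropriate side to rigidify the representation $P=\tau\circ Q$, and handle the even/odd parity bookkeeping by letting the level $c(Q)$ record which side of the partition carries the $\lfloor n/2\rfloor$-subset. Your presentation via the full critical-pair set $\mathrm{Crit}(P)$ rather than a critical selection graph, and your writing of the bijection from $\binom{[n]}{\lfloor n/2\rfloor}\times\calQ^*_n$ to $\calP^*_n$ instead of the reverse, are only cosmetic differences.
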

\begin{proof}
Assume $n\ge 5$ and consider any $P\in\calP^*_n$.
In the proof of Lemma~\ref{lem:pair_transv_all} we showed that there exists a unique balanced ordered partition
$(W, \overline{W})$ of~$[n]$ such that
all critical pairs $(j,k)$ of~$P$ have $j\in W$ and $k\in \overline{W}$.
Let again $c := |\overline{W}|\in\big\{\lfloor\frac{n}{2}\rfloor,\, \lceil\frac{n}{2}\rceil\big\}$,
but now select the (unique) ``canonical'' permutation $\tau_W$ associated with this ordered partition,
that \emph{monotonically} maps $[c]$ to~$\overline{W}$ (i.e., such that
$1\le i < j\le c$ implies $\tau_W(i)\in \overline{W}$ and $\tau_W(i) < \tau_W(j)\in \overline{W}$)
and monotonically maps $\overline{[c]} = \{c+1,\dots,n\}$ to~$W$ (i.e., such that
$c+1\le k < l\le n$ implies $\tau_W(k)\in W$ and $\tau_W(k) < \tau_W(l)\in W$).
Then, as also noted in the proof of Lemma~\ref{lem:pair_transv_all},
every critical pair in $Q_P := \tau_W^{-1}\circ P$ is an inversion,
and thus $Q_P \in \calQ^*_n$.

Consider the mapping $\Phi : \calP^*_n \mapsto \binom{[n]}{\lfloor n/2\rfloor} \times \calQ^*_n$
defined by $\Phi(P) = \big(\Phi(P)_1, \Phi(P)_2\big)$
with $\Phi(P)_1 = \overline{W}$ if $Q_P$ is a transversal of $\calF_{\lfloor n/2\rfloor}$
(i.e., if $|\overline{W}| = \lfloor n/2\rfloor$, where
$\{W,\overline{W}\}$ is the balanced ordered partition associated with~$P$, as defined in the preceding paragraph), and
$\Phi(P)_1 = W$ otherwise (i.e., if $n$ is odd and $Q_P$ is a transversal of $\calF_{\lceil n/2 \rceil}$, and
thus $|W| = \lfloor n/2\rfloor$);
and with $\Phi(P)_2 = Q_P = \tau_W^{-1}\circ P$.
To complete the proof, it suffices to show that every pair $(X,Q)\in \binom{[n]}{\lfloor n/2\rfloor} \times \calQ^*_n$
is the image $\Phi(P)$ of exactly one $P\in\calP^*_n$.

Thus consider any $(X,Q)\in \binom{[n]}{\lfloor n/2\rfloor} \times \calQ^*_n$.
Recall that the balanced ordered partition associated with any transversal $Q$ of~$\calF_c$ is $(\overline{[c]},[c])$.

If $Q$ is a transversal of $\calF_{\lfloor n/2\rfloor}$ then we use $\overline{X}$ to play the role of~$W$, that is,
we let $P := \tau_{\overline{X}}\circ Q$, so $P\in\calP^*_n$ and $\Phi(P)_2 = \tau_{\overline{X}}^{-1}\circ P = Q$.
The balanced ordered partition associated with~$P$ is
$(\tau_{\overline{X}}\circ \overline{[c]},\;\tau_{\overline{X}}\circ [c])  = (\overline{X}, X)$, and
therefore $\Phi(P)_1 = X$.
This implies that $\Phi(P) = (X,Q)$, as desired.
Furthermore, consider any $P'\in\calP^*_n$ such that $\Phi(P') = (X,Q)$.
Since $Q_{P'} = \Phi(P')_2 = Q$ is a transversal of $\calF_{\lfloor n/2\rfloor}$ and $\Phi(P')_1 = X$,
the balanced ordered partition associated with~$P'$ is $(W', \overline{W'}) = (\overline{X}, X)$.
But then $P' = \tau_{W'}\circ Q_{P'}= \tau_{\overline{X}}\circ Q = P$.
Therefore, for every $(X,Q)\in \binom{[n]}{\lfloor n/2\rfloor} \times \calQ^*_n$ such that
$Q$ is a transversal of $\calF_{\lfloor n/2\rfloor}$, there exists exactly one $P\in\calP^*_n$
such that $\Phi(P) = (X,Q)$.

The proof for the remaining case, i.e., when $n$ is odd and $Q$ is a transversal of $\calF_{\lceil n/2\rceil}$,
is similar, by simply exchanging the roles of $X$ and $\overline{X}$.
This shows that $\Phi$ is a one-to-one correspondence from $\calP^*_n$
to $\binom{[n]}{\lfloor n/2\rfloor} \times \calQ^*_n$.
\end{proof}

The proof of Theorem~\ref{thm:pairs} is complete

\begin{remark}\label{rem:pairs}
As seen at the beginning of Section~\ref{sec:pairs}, $\calP^*_2 = 1 =\calQ^*_2$.
For $n=3$ it can be verified that
$\calP^*_3$ consists of the two
orbits $\{123,\ 312,$ $231\}$ and $\{132,\, 213,\ 321\}$ of the circular shift,
thus $|\calP^*_3| = 2$ (while $|\calQ^*_3| = 3$).

\par\smallbreak
For $n=4$ we have two classes of maximum-cardinality minimal pair-complete subsets
(mentioned in the introduction):
\begin{enumerate}[(1)]
\item the $3! = 6$ orbits
  $P = \{\pi,\, \pi\circ\sigma,\, \pi\circ\sigma^2,\dots, \pi\circ\sigma^{n-1}\}$
  of the circular shift~$\sigma$, one for each permutation $\pi = \rho 4$ (permutation $\rho$ followed by 4) defined by each $\rho\in S_3$; and
\item the $\binom{4}{2} = 6$ distinct sets
  $P = \tau\circ Q$
  where $Q$ is the (unique) maximum-cardinality minimal pair-complete subset of~$S_4$, namely,
  the sets $P_{i,j} = \{ijkl,\, ilkj,\, kjil,$ $klij\}$ where $1\le i<j\le 4$ and
  $\{k,l\} = [4]\setminus\{i,j\}$.
  \end{enumerate}
Thus $|\calP^*_4| = 12$
(while $|\calQ^*_4| = 1$).

\par\smallbreak
For $n\ge 5$, part~\textit{(iii)} of Theorem~\ref{thm:pairs} implies that
$|\calP^*_n| = \binom{n}{\lfloor n/2\rfloor} |\calQ^*_n|$.
Thus, for example, $|\calP^*_5| = 10\,|\calQ^*_5| = 128$, and so on,
with the same asymptotic growth rate $|\calP^*_n| = 2^{\theta(n^3 \log n)}$ as $|\calQ^*_n|$.
\end{remark}

\end{document}